\newtheorem{theorem}{Theorem}[section]
\newtheorem{lemma}[theorem]{Lemma}
\theoremstyle{remark}
\def\NN{\mathds{N}}
\def\RR{\mathbb{R}}
\def\QQ{\mathbb{Q}}
\def\CC{\mathbb{C}}
\def\ZZ{\mathbb{Z}}
\def\kk{\mathds{k}}
\begin{document}
	
\def\NN{\mathbb{N}}
\def\RR{\mathds{R}}
\def\HH{I\!\! H}
\def\QQ{\mathbb{Q}}
\def\CC{\mathds{C}}
\def\ZZ{\mathbb{Z}}
\def\DD{\mathds{D}}
\def\OO{\mathcal{O}}
\def\kk{\mathds{k}}
\def\KK{\mathbb{K}}
\def\ho{\mathcal{H}_0^{\frac{h(d)}{2}}}
\def\LL{\mathbb{L}}
\def\L{\mathds{k}_2^{(2)}}
\def\M{\mathds{k}_2^{(1)}}
\def\k{\mathds{k}^{(*)}}
\def\l{\mathds{L}}

\selectlanguage{english}
\title[The $2$-Iwasawa module...]{The $2$-Iwasawa module over certain octic
	elementary fields}

\author[M. M. Chems-Eddin]{Mohamed Mahmoud Chems-Eddin}
\address{Mohamed Mahmoud CHEMS-EDDIN: Mohammed First University, Mathematics Department, Sciences Faculty, Oujda, Morocco }
\email{2m.chemseddin@gmail.com}

\subjclass[2010]{ 11R29; 11R23;   11R18; 11R20.}
 
  	\keywords{Cyclotomic $Z_p$-extension, $2$-Iwasawa module, $2$-rank, $2$-class group.}
 
 \maketitle
 \begin{abstract} 
 		The aim of this paper is to determine the structure of  $2$-Iwasawa module of some imaginary triquadratic fields.
 \end{abstract}

 \section{Introduction}
 \label{Sec:1}
 
 Let $\ell$ denote   a prime number. Let $\ZZ_\ell$  be the additive group of $\ell$-adic integers,  $\Lambda=\mathbb Z_\ell \llbracket  T \rrbracket$ and 	$M$  a finitely generated   $\Lambda$-module.   It is known, by the structure theorem,  that  there exist  $r$, $s$,  $t$, $n_i$, $m_i  \in\mathbb Z$,  and  $f_i$   distinguished and irreducible polynomials  of  $\mathbb Z_\ell[T]$ such that 
 \begin{eqnarray*}
 	M\sim \Lambda^r \oplus \left( \bigoplus_{i=1}^{s} \Lambda/ (\ell^{n_i})\right)\oplus \left(  \bigoplus_{j=1}^{t}\Lambda/(f_j(T)^{m_j}) \right).
 \end{eqnarray*}

 Denote   by $k_\infty/k$ the cyclotomic  $\mathbb Z_\ell$-extension   of a number field $k$.
 For any integer $n\geq 1$, it is known that  $k_\infty$ contains a unique cyclic subfield $k_n$  of degree $\ell^n$ over $k$. The field $k_n$ is called the $n$-th	layer of 
 the cyclotomic $\mathbb Z_\ell$-extension of $k$.  Let $A_n(k)$  denote the $\ell$-class group of $k_n$ and us consider the Iwasawa module  $A_\infty(k)=\varprojlim A_n(k)$ which is a finitely generated $\Lambda$-module.
 
 The determination of the structure of $A_\infty(k)$, for a given number field $k$,    is a very difficult and interesting  problem in Iwasawa theory.	
 In this paper we shall deal with this problem (in the case $\ell =2$) for  some  infinite families of number fields of the form $F=\QQ(\sqrt{p},\sqrt{q}, i)$, where $p$ and $q$ are two primes  satisfying one of the following conditions:

 \begin{eqnarray}\label{CONDD1}
	q\equiv7\pmod 8, \;p\equiv 5 \text{ or } 3\pmod 8 \text{ and }   \left(\dfrac{p}{q}\right)=1.
\end{eqnarray}
 \begin{eqnarray}\label{COND2D}
 q\equiv7\pmod 8, \;p\equiv 5 \text{ or } 3\pmod 8 \text{ and }   \left(\dfrac{p}{q}\right)=-1.
 \end{eqnarray}

 Let us adopt the following notations:    $h_2(d)$   the $2$-class number  of $\QQ(\sqrt{d})$, $h(k)$ (resp. $h_2(k)$) the class number  (resp.   $2$-class number) of a number field $k$, $k^+$ the maximal real subfield of $k$, $\mathrm{Cl}(k)$ the class group of $k$,    $q(k)$ the unit index of a multiquadratic number field $k$ and $N_{k'/k}$ the norm map of an extension $k'/k$.

 \section{\textbf{The  $2$-Iwasawa module}}
 Let us recall some results that will be useful in what follows.

 \begin{lemma}[\cite{chemsZkhnin2}]\label{lm deco to 2}
 	Let $m\geq1$ and    $p$  a  prime such that $p\equiv 3$ or $ 5  \pmod 8$. Then $p$ decomposes into the product of  $2$ prime ideals of $K_n=\mathbb Q{(\zeta_{2^{n+2}})}$ while it is inert in 
 	$K_n^+$.
 \end{lemma}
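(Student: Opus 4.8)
The plan is to translate the splitting behaviour of $p$ into a purely group-theoretic computation inside $G_n := \mathrm{Gal}(K_n/\QQ) \cong (\ZZ/2^{n+2}\ZZ)^\times$. Since $p$ is odd it is unramified in $K_n=\QQ(\zeta_{2^{n+2}})$, and its Frobenius is exactly the class of $p$ in $G_n$; consequently the number of primes of $K_n$ above $p$ equals the index $[G_n : \langle p\rangle]$, and $p$ is inert in a given subfield iff the image of $\langle p\rangle$ fills the corresponding quotient of $G_n$. So the whole statement reduces to computing the order of $p$ in $G_n$ and deciding whether $-1\in\langle p\rangle$.

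First I would recall the structure of the group. For $n\geq 1$ we have $2^{n+2}\geq 8$, hence
$$G_n=\langle -1\rangle\times\langle 5\rangle\cong \ZZ/2\ZZ\times\ZZ/2^{n}\ZZ,$$
where $\langle 5\rangle$ is precisely the subgroup of classes $\equiv 1\pmod 4$ and has order $2^{n}$. In particular $-1\equiv 3\pmod 4$ lies outside $\langle 5\rangle$, and $K_n^+$, the fixed field of complex conjugation $-1$, has Galois group $G_n/\langle -1\rangle\cong \ZZ/2^{n}\ZZ$ of order $2^{n}$.

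Next I would read off a normal form for $p$ from its residue mod $8$. If $p\equiv 5\pmod 8$ then $p\equiv 1\pmod 4$, so $p=5^{j}$ in $G_n$; since $5^{j}\equiv 5\pmod 8$ for $j$ odd and $\equiv 1\pmod 8$ for $j$ even, the hypothesis $p\equiv 5\pmod 8$ forces $j$ odd. If $p\equiv 3\pmod 8$ then $p\equiv 3\pmod 4$, so $p=-5^{j}$, and since $-5^{j}\equiv 3\pmod 8$ exactly when $j$ is odd, again $j$ is odd. In either case the $\langle 5\rangle$-component of $p$ generates $\langle 5\rangle$, so $p$ has order $2^{n}$ in $G_n$: an equation $p^{t}=(\pm1)^{t}5^{jt}=1$ forces $2^{n}\mid jt$, hence $2^{n}\mid t$ (as $j$ is odd), while $t=2^{n}$ clearly works. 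Moreover $-1\notin\langle p\rangle$, for an identity $p^{t}=-1$ would require $5^{jt}=1$ together with $(\pm1)^{t}=-1$, i.e. $2^{n}\mid t$ yet $t$ odd, impossible for $n\geq 1$.

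Finally I would assemble the conclusion. Since $|\langle p\rangle|=2^{n}$ and $|G_n|=2^{n+1}$, the prime $p$ splits into $[G_n:\langle p\rangle]=2$ primes in $K_n$. Because $-1\notin\langle p\rangle$, the projection $G_n\to G_n/\langle -1\rangle$ is injective on $\langle p\rangle$, so the image of $p$ still has order $2^{n}=[K_n^+:\QQ]$; thus $\langle p\rangle$ surjects onto $\mathrm{Gal}(K_n^+/\QQ)$ and $p$ is inert in $K_n^+$. The only genuinely delicate point is the mod-$8$ bookkeeping that pins down the parity of $j$ and rules out $-1\in\langle p\rangle$; everything else is the standard Frobenius dictionary for cyclotomic fields.
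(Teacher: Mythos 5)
Your proof is correct. There is, however, nothing in the paper to compare it against: the lemma is imported by citation from \cite{chemsZkhnin2}, and the paper contains no proof of it, so your argument stands or falls on its own. It stands. The reduction to group theory is the standard Frobenius dictionary (for odd $p$ the extension $K_n/\QQ$ is unramified at $p$, the Frobenius is the class of $p$ in $G_n\cong(\ZZ/2^{n+2}\ZZ)^\times$, the number of primes above $p$ is $[G_n:\langle p\rangle]$, and inertness in the subfield fixed by $H=\langle -1\rangle$ amounts to $\langle p\rangle H=G_n$). The decisive computation is also right: for $n\geq 1$ one has $G_n=\langle -1\rangle\times\langle 5\rangle$ with $\langle 5\rangle$ of order $2^n$ equal to the classes $\equiv 1\pmod 4$, and the hypothesis $p\equiv 3,5\pmod 8$ forces $p=\pm 5^{j}$ with $j$ odd (your mod-$8$ bookkeeping is exactly the point where $p\equiv 1,7\pmod 8$ would fail, as it should, since such primes split further or are inert differently). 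From $j$ odd you correctly get that $p$ has order $2^n$ in $G_n$ and that $-1\notin\langle p\rangle$, whence two primes in $K_n$ and inertness in $K_n^+$, since $\langle p\rangle\cap\langle -1\rangle=\{1\}$ makes the projection to $\mathrm{Gal}(K_n^+/\QQ)\cong G_n/\langle -1\rangle$ injective on $\langle p\rangle$ and hence surjective by counting. One cosmetic remark: in the case $p\equiv 5\pmod 8$ the impossibility of $p^{t}=-1$ is even more immediate than your phrasing suggests, since then $p\in\langle 5\rangle$ and $-1\notin\langle 5\rangle$; the parity-of-$t$ argument is only needed when $p=-5^{j}$.
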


 %

 \begin{theorem}[\cite{kida}]\label{Kida}
 	Let $F$ and $K$ be   $CM$-fields and $K/F$ a finite $2$-extension. Assume that $\mu^-(F)=0$. Then $\mu^-(K)=0$ and
 	\[\lambda^-(K)-\delta(K)=[K_{\infty}:F_{\infty}]\left(\lambda^-(F)-\delta(F)\right)+\sum (e_{\beta}-1)-\sum(e_{\beta^+}-1),\]
 	where $\delta(k)$ takes the values $1$ or $0$ according to whether  $F_\infty$ contains the fourth roots of unity or not. The $e_{\beta}$ is the ramification index of a prime $\beta$ in $K_{\infty}/F_{\infty}$ coprime to $2$ and $e_{\beta^+}$ is the ramification index for a prime coprime to $2$ in $K_{\infty}^+/F_{\infty}^+$.
 \end{theorem}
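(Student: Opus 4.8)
The plan is to follow Iwasawa's representation-theoretic reformulation of Kida's formula, reading $\lambda^-$ off as the $\mathbb{Q}_2$-dimension of the minus part of the rationalized Iwasawa module and computing how this dimension changes under the finite $2$-extension $K_\infty/F_\infty$. Write $X_F$ and $X_K$ for the Galois groups of the maximal unramified abelian pro-$2$-extensions of $F_\infty$ and $K_\infty$, let complex conjugation act, and pass rationally to the minus eigenspaces, so that $V_F^-=X_F^-\otimes_{\mathbb{Z}_2}\mathbb{Q}_2$ and $V_K^-=X_K^-\otimes_{\mathbb{Z}_2}\mathbb{Q}_2$.

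First I would establish the vanishing $\mu^-(K)=0$. Since $K_\infty/F_\infty$ is a finite $2$-extension, the hypothesis $\mu^-(F)=0$ forces $X_F^-$ to be a finitely generated $\mathbb{Z}_2$-module, and Iwasawa's theorem on the inheritance of the vanishing of $\mu$ along finite $p$-extensions of $\mathbb{Z}_p$-fields then gives that $X_K^-$ is finitely generated over $\mathbb{Z}_2$ as well, i.e. $\mu^-(K)=0$. Consequently $\dim_{\mathbb{Q}_2}V_F^-=\lambda^-(F)$ and $\dim_{\mathbb{Q}_2}V_K^-=\lambda^-(K)$, and the whole theorem is reduced to computing $\dim_{\mathbb{Q}_2}V_K^-$ in terms of $\dim_{\mathbb{Q}_2}V_F^-$ and local ramification data.

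Next I would reduce to the case $[K_\infty:F_\infty]=2$. Since $G=\mathrm{Gal}(K_\infty/F_\infty)$ is a finite $2$-group it admits a filtration by subgroups of index $2$, and one checks that the asserted identity is compatible with composition along such a tower: the term $(\lambda^--\delta)$ scales by the relative degree while the ramification sums combine through multiplicativity of the ramification indices. Granting this, assume $G\cong\mathbb{Z}/2\mathbb{Z}$ and analyse $V_K^-$ as a $\mathbb{Q}_2[G]$-module. The key input is an Iwasawa-theoretic ambiguous class number (genus) formula comparing $X_K^-$ with its $G$-coinvariants and with $X_F^-$: the kernel and cokernel of the natural comparison map are controlled by the primes of $F_\infty$ ramifying in $K_\infty$, each ramified prime $\beta$ coprime to $2$ contributing its inertia of size $e_\beta$. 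Taking $\mathbb{Q}_2$-dimensions, the $G$-invariant part recovers $\lambda^-(F)$ (the finite discrepancy disappearing rationally), the coprime-to-$2$ ramification contributes $\sum(e_\beta-1)$, and subtracting the corresponding contribution from the totally real layers $K_\infty^+/F_\infty^+$, which governs the plus part and enters the minus part with the opposite sign, produces $-\sum(e_{\beta^+}-1)$; the $\delta$ terms record whether $i\in F_\infty$.

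The main obstacle is exactly this genus-theoretic computation of the $\mathbb{Q}_2[G]$-module structure of $V_K^-$, equivalently the determination of the Tate cohomology groups $\hat H^i(G,X_K^-)$. One must pin down the decomposition and inertia behaviour of each prime, separating the primes above $2$ (which contribute nothing to the coprime-to-$2$ sums) from the rest, track the action of complex conjugation so as to isolate the minus part correctly, and handle the genuine $p=2$ subtleties in the plus/minus splitting together with the $\delta$-correction coming from the fourth roots of unity. Once the ramification terms are extracted with the correct signs, comparing $\dim_{\mathbb{Q}_2}V_K^-$ with $\dim_{\mathbb{Q}_2}V_F^-$ yields the stated Riemann--Hurwitz-type formula.
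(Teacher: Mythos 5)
First, a point of status: the paper does not prove this statement at all. It is Kida's theorem, imported verbatim with the citation \cite{kida} and used as a black box in the proof of Theorem \ref{thm main}. So your proposal can only be measured against the published proofs (Kida's genus-theoretic argument, or Iwasawa's representation-theoretic Riemann--Hurwitz treatment), and against those it is a correct outline of the architecture --- rationalize the minus Iwasawa modules, establish $\mu^-$-descent, reduce to relative degree $2$, and compute the jump in $\lambda^-$ by genus theory --- but it is not a proof.

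The genuine gap is the one you flag yourself: the ``main obstacle,'' namely the computation of $\hat{H}^i(G,X_K^-)$ (equivalently the $\mathbb{Q}_2[G]$-module structure of $V_K^-$) in terms of local ramification data, \emph{is} the content of Kida's theorem, and you defer it rather than carry it out. Every $p=2$ subtlety lives exactly there: complex conjugation acts on a $2$-primary module, so the plus/minus splitting is not integral and must be handled up to controlled $2$-torsion; the $\delta$-terms have no analogue for odd $p$ and must be produced by the cohomological computation at the primes interacting with the fourth roots of unity; and the negative sign on $\sum(e_{\beta^+}-1)$ has to come out of that computation, not be inserted afterwards. A proposal whose final step is ``once the ramification terms are extracted with the correct signs, the formula follows'' has reduced the theorem to itself. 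Two of your reduction steps also need repair: (i) the $\mu$-descent you invoke is Iwasawa's theorem that $\mu(F)=0$ propagates along finite $p$-extensions, but the hypothesis here is only $\mu^-(F)=0$, and the minus-part version is itself part of what Kida and Iwasawa prove, not an off-the-shelf citation; (ii) the d\'evissage to degree $2$ requires that intermediate fields of the $2$-tower are again CM (true, since a totally imaginary subfield of a CM field is CM, but it must be said), a Galois-closure argument when $K/F$ is not normal, and an explicit check that the $\delta$-terms compose along the tower --- which is not automatic, because $\delta$ can jump from $0$ to $1$ at an intermediate layer, e.g.\ $F=\mathbb{Q}(\sqrt{-5})$ has $\delta(F)=0$ while $E=F(i)$ has $\delta(E)=1$.
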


 Let $rank_\ell (G)$ denote the $\ell$-rank of an abelian group $G$ and  $\mu(k)$,   $\lambda(k)$  denote the Iwasawa Invariants of $k$. We have:

  \begin{theorem}\label{thm rank zp}
 	Let $K_\infty/k$ be a $\ZZ_\ell$-extension of a  number field $k$ and assume that any prime which is ramified in 
 	$K_\infty/k$ is totally ramified. If $\mu(k)=0$ and  $\varprojlim A_n$ is an elementary $\Lambda$-module, then 
 	$rank_\ell(A_n)=\lambda(k)$ for all $n\geq \lambda(k)$. 		
 \end{theorem}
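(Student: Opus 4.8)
The plan is to combine the structure of the elementary module $X:=\varprojlim A_n$ with the description of each layer $A_n$ as a module of $\Gamma$-coinvariants coming from the total ramification hypothesis, and then to extract the $\ell$-rank by a reduction modulo $\ell$. First I would record the shape of $X$. Since $X$ is assumed to be an elementary $\Lambda$-module it is torsion and splits as a finite direct sum of cyclic modules, and the hypothesis $\mu(k)=0$ removes every summand of the form $\Lambda/(\ell^{n_i})$; hence
$$X\cong\bigoplus_{j=1}^{t}\Lambda/\big(f_j(T)^{m_j}\big),$$
with the $f_j$ distinguished irreducible polynomials of $\ZZ_\ell[T]$. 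By the very definition of the $\lambda$-invariant, $\lambda(k)=\sum_{j=1}^{t}m_j\deg f_j$; writing $d_j:=m_j\deg f_j$ this reads $\lambda(k)=\sum_{j=1}^{t}d_j$.

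Next I would invoke the standard consequence of the ramification assumption (Iwasawa; cf.\ Washington): if every prime ramified in $K_\infty/k$ is totally ramified, then the canonical projection identifies $A_n$ with the $\Gamma_n$-coinvariants of $X$, i.e.\ $A_n\cong X/\omega_n X$, where $\Gamma_n=\mathrm{Gal}(K_\infty/k_n)$ and $\omega_n=(1+T)^{\ell^n}-1$ corresponds to $\gamma^{\ell^n}-1$ for a fixed topological generator $\gamma$ of $\Gamma$. This identification is the one external input of the argument, and making it precise in the presence of several totally ramified primes is the step I expect to be the main obstacle; everything after it is formal.

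The rank is then a direct computation. Because $A_n/\ell A_n=X/(\ell,\omega_n)X$ and the decomposition of $X$ passes to the quotient,
$$\mathrm{rank}_\ell(A_n)=\dim_{\mathbb F_\ell}X/(\ell,\omega_n)X=\sum_{j=1}^{t}\dim_{\mathbb F_\ell}\mathbb F_\ell[[T]]/\big(\overline{f_j^{m_j}},\,\overline{\omega_n}\big).$$
Reducing modulo $\ell$ one has $\overline{\omega_n}=(1+T)^{\ell^n}-1\equiv T^{\ell^n}$ in $\mathbb F_\ell[[T]]$, while each distinguished polynomial satisfies $\overline{f_j}=T^{\deg f_j}$, so that $\overline{f_j^{m_j}}=T^{d_j}$. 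Consequently the $j$-th summand is $\mathbb F_\ell[[T]]/(T^{d_j},T^{\ell^n})=\mathbb F_\ell[[T]]/(T^{\min(d_j,\ell^n)})$, whose $\mathbb F_\ell$-dimension is $\min(d_j,\ell^n)$, and therefore $\mathrm{rank}_\ell(A_n)=\sum_{j=1}^{t}\min(d_j,\ell^n)$.

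Finally I would impose $n\geq\lambda(k)$. Since $\ell^n\geq 2^n>n\geq\lambda(k)=\sum_j d_j\geq\max_j d_j$, each minimum is attained at $d_j$, and so $\mathrm{rank}_\ell(A_n)=\sum_{j=1}^{t}d_j=\lambda(k)$ for every $n\geq\lambda(k)$, which is the assertion. The only delicate point is thus the coinvariants identification of the second paragraph; the reduction modulo $\ell$ and the elementary inequality in the last step are routine.
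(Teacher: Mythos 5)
Your reduction-mod-$\ell$ computation is correct, but the step you yourself flag as the ``main obstacle'' is a genuine gap, and it is precisely the point where the argument fails as written. The identification $A_n\cong X/\omega_n X$ holds only when \emph{exactly one} prime ramifies in $K_\infty/k$ and is totally ramified. The theorem allows several totally ramified primes, and this is the case that actually occurs in the paper's application: in the cyclotomic $\ZZ_2$-extension of $F=\QQ(\sqrt p,\sqrt q,i)$ with $q\equiv 7\pmod 8$ there are several primes above $2$. With $s\geq 2$ totally ramified primes the standard lemma of Iwasawa (cf.\ Washington, \emph{Introduction to Cyclotomic Fields}, \S 13.3) gives instead
\begin{equation*}
A_n\;\cong\;X\big/\bigl(\omega_n X+\nu_n\langle a_2,\dots,a_s\rangle\bigr),\qquad \nu_n=\omega_n/T,
\end{equation*}
for certain elements $a_i\in X$ coming from the inertia groups, so $A_n$ is a possibly \emph{proper} quotient of $X/\omega_n X$. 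Your computation therefore yields only the upper bound $\mathrm{rank}_\ell(A_n)\le\lambda(k)$; the essential lower bound --- that the extra relations $\nu_n a_i$ do not destroy rank --- is never established, and without it the theorem is not proved.

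The gap is repairable along your own lines: modulo $\ell$ one has $\nu_n\equiv T^{\ell^n-1}$, and since $X/\ell X\cong\bigoplus_j\mathbb{F}_\ell[[T]]/(T^{d_j})$ is annihilated by $T^{\ell^n-1}$ as soon as $\ell^n-1\geq\max_j d_j$ (which holds for $n\geq\lambda(k)$ because $\ell^n\geq 2^n\geq n+1\geq\lambda(k)+1\geq \max_j d_j+1$), the extra generators vanish in $A_n/\ell A_n$ and your dimension count goes through; but this verification is the heart of the matter and must be written out. Note that the paper takes a completely different route that sidesteps this algebra: since every ramified prime is totally ramified, the norm maps $A_{n+1}\to A_n$ are surjective, so the ranks form a non-decreasing sequence bounded by $\lambda(k)$ which eventually equals $\lambda(k)$; if some $\mathrm{rank}_\ell(A_{m_0})$ with $m_0\geq\lambda(k)$ were $\leq\lambda(k)-1$, a pigeonhole argument over the layers $0,1,\dots,\lambda(k)$ produces consecutive equal ranks, and Fukuda's stabilization theorem then freezes the rank below $\lambda(k)$ forever --- a contradiction. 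That argument is insensitive to the number of ramified primes, which is exactly why the paper needs no analogue of your coinvariants identification.
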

 
 \begin{proof}
 	We have $rank_\ell(A_n)=\lambda(k)$, for $n$ large enough and we have $rank_\ell(A_{n})\leq \lambda(k)$, for all $n$.
 	Assume that $rank_\ell(A_{m_0})\leq \lambda(k)-1$ for some $m_0\geq \lambda(k)$. This implies that necessary there is $r\leq  \lambda(k)$ 	such that 	
 	$rank_\ell(A_{r})=rank_\ell(A_{r+1})$ and therefore by \cite[Theorem 1]{fukuda},  $rank_\ell(A_{n})=rank_\ell(A_{k+1})\leq \lambda(k)-1$, for all $n\geq r$. Which is absurd.
 \end{proof}



 

 \begin{lemma}\label{lm parity of cn of real fields}
 	Let $p$  and $q$  be two primes satisfying conditions \eqref{CONDD1} or \eqref{COND2D}. Set $\pi_1= 2$, $\pi_2=2+\sqrt{2}$,..., $\pi_n=2+\sqrt{\pi_{n-1}}$. Then,     the $2$-class group of  $\mathbb{Q}(\sqrt{p}, \sqrt{q}, \sqrt{\pi_n})$ is trivial for all $n\geq 1$.
 \end{lemma}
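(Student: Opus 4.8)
The plan is to realize the fields $\QQ(\sqrt p,\sqrt q,\sqrt{\pi_n})$ as the layers of a cyclotomic $\ZZ_2$-extension and to run an Iwasawa-theoretic argument based at the triquadratic field $\KK$ whose class number we have just controlled. First I would record that $\sqrt{\pi_n}=2\cos(\pi/2^{n+1})=\zeta_{2^{n+2}}+\zeta_{2^{n+2}}^{-1}$, so that $\QQ(\sqrt{\pi_n})=\QQ(\zeta_{2^{n+2}})^{+}$ is exactly the $n$-th layer of the cyclotomic $\ZZ_2$-extension of $\QQ$ (for $n=1$ this is $\QQ(\sqrt2)$, for $n=2$ it is $\QQ(\sqrt{2+\sqrt2})=\QQ(\zeta_{16})^{+}$, and so on). Writing $L_n=\QQ(\sqrt p,\sqrt q,\sqrt{\pi_n})$, I would note that the only cyclic-over-$\QQ$ subfields of the exponent-$2$ field $L_1=\QQ(\sqrt2,\sqrt p,\sqrt q)=\KK$ have degree at most $2$, while $\QQ(\zeta_{16})^{+}$ is cyclic quartic; hence $L_1\cap\QQ(\zeta_{2^{\infty}})^{+}=\QQ(\sqrt2)$, and the $L_n$ are precisely the successive layers of the cyclotomic $\ZZ_2$-extension $L_\infty/L_1$ with base $L_1=\KK$.

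The base case is the statement that the $2$-class group $A(L_1)$ is trivial, i.e. that $h(\KK)$ is odd. Under \eqref{CONDD1} this is Theorem \ref{thm first main  thm on units under conditions (3)} (which covers both $p\equiv5$ and $p\equiv3\pmod8$), and under \eqref{COND2D} it is the analogous unit-index and class-number computation for $\KK$. It therefore remains only to propagate triviality up the tower.

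The key local input, and the step I expect to be the main obstacle, is to show that $L_1$ has a \emph{unique} prime above $2$ and that it is totally ramified in $L_\infty/L_1$. Total ramification is automatic, since $L_1\supseteq\QQ(\sqrt2)=\QQ(\zeta_8)^{+}$ and the cyclotomic $\ZZ_2$-extension is totally ramified at $2$. For uniqueness I would argue locally at $2$: the classes of $2,p,q$ are $\mathbb{F}_2$-independent in $\QQ_2^{\times}/(\QQ_2^{\times})^2\cong(\ZZ/2)^3$ (using $q\equiv7\equiv-1$, together with $p\equiv5$ or $p\equiv3\equiv-5\pmod8$), so that $[\QQ_2(\sqrt2,\sqrt p,\sqrt q):\QQ_2]=8=[\KK:\QQ]$. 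Thus the local degree $ef$ already equals $8$, forcing the number of primes above $2$ to be $g=1$. Since a cyclotomic $\ZZ_2$-extension is unramified outside $2$, the extension $L_\infty/L_1$ has exactly one ramified prime, and it is totally ramified.

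Finally I would invoke Iwasawa's structure theorem for a $\ZZ_2$-extension with a single totally ramified prime. Setting $X=\varprojlim_n A(L_n)$, this yields the coinvariant identification $A(L_1)\cong X/TX$. Since $A(L_1)=0$ we get $X=TX$, whence $X=\bigcap_k T^kX\subseteq\bigcap_k\mathfrak m^kX=0$ by Krull's intersection theorem applied to the finitely generated $\Lambda$-module $X$ over the local ring $\Lambda=\ZZ_2\llbracket T\rrbracket$; thus $X=0$. Because the prime above $2$ is totally ramified, the norm maps $A(L_{n+1})\to A(L_n)$ are surjective, so the projections $X\to A(L_n)$ are surjective, and $X=0$ forces $A(L_n)=0$ for every $n\geq1$. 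Equivalently, one may finish through Theorem \ref{thm rank zp}: total ramification together with $\mu=\lambda=0$ (the vanishing of $\lambda$ being forced by $A(L_1)\cong X/TX=0$) gives $\mathrm{rank}_2 A(L_n)=0$, hence $A(L_n)=0$, for all $n$.
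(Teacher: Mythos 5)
Your proof is essentially correct in outline but takes a genuinely different route from the paper, and it has one step whose justification is wrong as stated. For comparison: the paper stays at the bottom of the tower. It computes $h_2(\QQ(\sqrt p,\sqrt q))=1$ via the class number formula of \cite{wada}, gets oddness of $h(\KK)$ from Theorem \ref{thm first main  thm on units under conditions (3)} under \eqref{CONDD1} and from \cite[Theorem 4.4]{CZA-UH} under \eqref{COND2D}, and then invokes Fukuda's stability theorem \cite[Theorem 1]{fukuda}: once two consecutive layers have $2$-class groups of the same order, the order is constant along the whole $\ZZ_2$-extension. You instead base the tower at $L_1=\KK$, prove there is a single totally ramified prime, and run the classical coinvariant argument $A(L_1)\cong X/TX$ plus Nakayama. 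Both routes need the same base-case input; note that under \eqref{COND2D} the oddness of $h(\KK)$ is \emph{not} proved in this paper (it is quoted from \cite[Theorem 4.4]{CZA-UH}), so your appeal to ``the analogous unit-index and class-number computation'' is really an external citation, not something you can treat as done.

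The genuine gap is the sentence ``Total ramification is automatic, since $L_1\supseteq\QQ(\sqrt2)=\QQ(\zeta_8)^{+}$ and the cyclotomic $\ZZ_2$-extension is totally ramified at $2$.'' That inference is false in general, because base change can absorb ramification. Concretely, take $L=\QQ\bigl(\sqrt2,\sqrt{5(2+\sqrt2)}\,\bigr)$: it contains $\QQ(\sqrt2)$, it has a unique prime above $2$ which is even totally ramified in $L/\QQ$, and yet the first layer of the cyclotomic $\ZZ_2$-extension of $L$ is $L(\sqrt{2+\sqrt2})=L(\sqrt5)$, which is \emph{unramified} above $2$. So this step requires an actual argument; two cheap repairs are available in your setup. (a) Push your local computation up the tower: the completion of $L_n$ at its unique prime above $2$ is $\QQ_2(\zeta_{2^{n+2}}+\zeta_{2^{n+2}}^{-1})(\sqrt p,\sqrt q)$, which for every $n\geq1$ has degree $2^{n+2}$, ramification index $2^{n+1}$ and residue degree $2$ over $\QQ_2$; since the local degree equals the global degree and the residue degree does not grow with $n$, the unique prime above $2$ is totally ramified (and non-split) in $L_\infty/L_1$. (b) Use your own base case: you have $g=1$, so if the inertia subgroup $I$ of $\Gamma=\mathrm{Gal}(L_\infty/L_1)\cong\ZZ_2$ were proper, its fixed field would be a nontrivial abelian extension of $L_1$ unramified everywhere (only the prime above $2$ can ramify, and it is unramified there by definition of $I$), contradicting the triviality of $A(L_1)$; hence $I=\Gamma$. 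With either repair, the rest of your argument --- the isomorphism $A(L_1)\cong X/TX$, Nakayama to get $X=0$, and surjectivity of $X\to A(L_n)$ in a totally ramified tower --- is correct and yields the lemma.
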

 \begin{proof}
 	Assume that $p\equiv5\pmod 8$.
 	Put $k=\mathbb{Q}(\sqrt{p},\sqrt{q})$. If $\left(\dfrac{p}{q}\right)=-1$, then by   \cite[Lemma 3.2]{CZA-UH} we easily deduce that $q(k)=2$ (we similarly show that we have the same equality for other cases). Note that 
 	by class number formula (cf. \cite{wada})  and the fact that we have $h_2(p)=h_2(q)=1$ and 
 	$h_2(pq)=2$  (cf.   \cite[Corollaries 18.4,  19.7 and    19.8]{connor88}). we have
 	\begin{eqnarray*}
 		h_2(k)&=&\frac{1}{4}q(k)h_2(p)h_2(q)h_2(pq)=\frac{1}{4} \cdot 2 \cdot 1 \cdot 1 \cdot 2=1.  
 	\end{eqnarray*}
 	
 	By \cite[Theorem 4.4]{CZA-UH}  and \cite[Theorem 3.14]{ChemsTriqua}  the class number of $k_1=\mathbb{Q}(\sqrt{p}, \sqrt{q}, \sqrt{2})$ is odd. Therefore the result follows by \cite[Theorem 1]{fukuda}. We  prove similarly the result for the case  $p\equiv 3\pmod 8$.
 \end{proof}

 \begin{theorem}\label{thm main}
 	Let $p$  and $q$  be two primes satisfying conditions \eqref{CONDD1} or \eqref{COND2D}. Put $F=\QQ(\sqrt{p},\sqrt{q}, i)$. Let $A_n(F)$ denote the $2$-class group of the $n$-th layer of the cyclotomic $\ZZ_2$-extension of $F$.
 	Then 
 	the Iwasawa module $A_\infty(F)=\varprojlim A_n(F)$ of $F$ is a $\Lambda$-module without finite part.
 	If moreover $q\equiv 7\pmod{16}$, then 
 	\begin{enumerate}[\rm \indent $\bullet$]
 		\item We have $	A_\infty(F) \simeq \ZZ_2^3.$
 		
 		\item $rank_2(A_n(F))=3$, for all $n\geq 3 $.
 	\end{enumerate}
 	
 \end{theorem}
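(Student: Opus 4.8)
The plan is to pass to the minus part of the Iwasawa module and then read off the Iwasawa invariants from Kida's formula. Since $F$ is a CM field with maximal real subfield $F^+=\QQ(\sqrt p,\sqrt q)$, and the $n$-th layer of the cyclotomic $\ZZ_2$-extension of $F^+$ is precisely $\QQ(\sqrt p,\sqrt q,\sqrt{\pi_n})$, Lemma~\ref{lm parity of cn of real fields} tells us that the $2$-class group of every real layer is trivial. Hence the plus part of $A_n(F)$ vanishes, $A_n(F)=A_n(F)^-$ for all $n$, and therefore $A_\infty(F)=A_\infty(F)^-$ with $\mu(F)=\mu^-(F)$ and $\lambda(F)=\lambda^-(F)$.

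Next I would apply Theorem~\ref{Kida} to the $2$-extension of CM fields $F/\QQ(i)$. The base field has $\mu^-(\QQ(i))=\lambda^-(\QQ(i))=0$, its cyclotomic $\ZZ_2$-extension being $\QQ(\zeta_{2^\infty})$, and $\delta(\QQ(i))=\delta(F)=1$ since $i$ lies in the base. As $[F:\QQ(i)]=4$, the formula yields $\mu(F)=0$ together with
\[
\lambda(F)=\lambda^-(F)=-3+\sum_{\beta}(e_\beta-1)-\sum_{\beta^+}(e_{\beta^+}-1),
\]
the sums running over the odd primes $\beta$ of $F_\infty$ (resp. $\beta^+$ of $F_\infty^+$) above $p$ and $q$, with ramification taken in $F_\infty/\QQ(i)_\infty$ (resp. $F_\infty^+/\QQ_\infty$). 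Lemma~\ref{lm deco to 2} fixes the decomposition of $p$ (two primes above $p$ in $\QQ(i)_\infty$, one in $\QQ_\infty$), and I would run the analogous decomposition analysis for $q$; this is exactly where $q\equiv 7\pmod{16}$ enters, since the congruence class of $q$ modulo $16$ (rather than merely modulo $8$) determines the number of primes above $q$ in $\QQ(i)_\infty$ and in $\QQ_\infty$, while the reciprocity relations in \eqref{CONDD1}--\eqref{COND2D} control the local splitting of each of $p,q$ at the other. Carrying out this ramification bookkeeping and arriving at $\sum_\beta(e_\beta-1)-\sum_{\beta^+}(e_{\beta^+}-1)=6$, hence $\lambda(F)=3$, is the main obstacle.

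For the structural assertions I would first observe that the only primes ramifying in $F_\infty/F$ lie above $2$ and that each of them is totally ramified: the ramification is induced from the totally ramified prime $(1+i)$ of $\QQ(i)_\infty/\QQ(i)$, and a short local computation shows $F/\QQ(i)$ is unramified at $2$ (in each case two of the three quadratic subextensions are ramified at $2$ over $\QQ$, which pins down the inertia). Having a nonempty family of totally ramified primes and no other ramification, the standard argument for $\ZZ_2$-extensions with totally ramified primes (in the spirit of Washington's proposition, in its several-prime form, via the genus surjection onto $A_0$ together with $\mu(F)=0$) shows that $X=A_\infty(F)$ has no nonzero finite $\Lambda$-submodule. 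This proves the first assertion, which needs neither the value of $\lambda$ nor the refined congruence, so it holds under \eqref{CONDD1} or \eqref{COND2D}.

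Finally, I would assemble the pieces. With $\mu(F)=0$ and no finite submodule, the pseudo-isomorphism $X\to\bigoplus_j\Lambda/(f_j)$ from the structure theorem has trivial kernel, so $X$ embeds with finite cokernel into a free $\ZZ_2$-module of rank $\lambda(F)$ and is itself $\ZZ_2$-free of that rank; under $q\equiv 7\pmod{16}$ this gives $A_\infty(F)\simeq\ZZ_2^3$. For the $2$-ranks, the hypotheses of Theorem~\ref{thm rank zp} hold---ramified primes totally ramified, $\mu(F)=0$, and $X$ elementary---so $\mathrm{rank}_2 A_n(F)=\lambda(F)=3$ for all $n\ge 3$; concretely this is the computation $A_n\cong X/\omega_n X$ reduced modulo $2$, where $\omega_n\equiv T^{2^n}$ acts on $X/2X\cong\mathbb{F}_2^{3}$ through a nilpotent operator, so the image of $\omega_n$ vanishes once $2^n\ge 3$. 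Verifying that $X$ is genuinely elementary (not merely pseudo-isomorphic to an elementary module) and securing the several-prime no-finite-submodule step are the remaining technical points I would need to pin down.
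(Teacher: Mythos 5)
Your reduction to the minus part via Lemma \ref{lm parity of cn of real fields} and your concluding assembly (no finite submodule plus pseudo-isomorphism gives $\ZZ_2$-freeness of rank $\lambda$, then the Fukuda-type stabilization of Theorem \ref{thm rank zp}) agree with the paper, but the two middle steps have genuine gaps. The more serious one is the no-finite-submodule step: there is no ``standard argument'' deducing the absence of nonzero finite $\Lambda$-submodules of $X=A_\infty(F)$ from total ramification of the primes above $2$ together with $\mu(F)=0$. The genus-theoretic descriptions $A_n\cong X/(\omega_n X+\cdots)$ are perfectly compatible with $X$ being finite and nonzero: for totally real fields with $\mu=\lambda=0$ whose $2$-class groups stabilize at a nontrivial group, all ramified primes can be totally ramified and $X$ is then its own nonzero finite submodule. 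The property you need is a CM (minus-part) phenomenon, and for $\ell=2$ it is delicate; the paper obtains it by showing $A_n=A_n^-$ in M\"uller's sense (the strongly ambiguous classes of $F_n/F_n^+$ vanish because the class number of $F_n^+$ is odd, by Lemma \ref{lm parity of cn of real fields}, and $F_n/F_n^+$ is unramified) and then invoking \cite[Theorem 2.5]{Katharina}. Your proposal has no substitute for this input.

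The second gap is the one you flag yourself: the value $\lambda^-(F)=3$ is never established, and it is the heart of both bullets. Your route (Kida's formula for the degree-$4$ extension $F/\QQ(i)$, with ramification bookkeeping at both $p$ and $q$) is workable --- the total $\sum(e_\beta-1)-\sum(e_{\beta^+}-1)$ does come out to $6$ --- but carrying it out requires the splitting behaviour of $q\equiv 7\pmod{16}$ in $\QQ(\zeta_{2^\infty})$ versus $\QQ_\infty$ and the local conditions at $p$, none of which you compute. The paper shortcuts exactly this point: it quotes $\lambda^-(\QQ(\sqrt q,i))=1$ for $q\equiv 7\pmod{16}$ from \cite[Theorem 4]{chemskatharina}, and applies Kida only to the quadratic step $F/\QQ(\sqrt q,i)$, so the only contributing odd primes are those above $p$; their decomposition (four primes in $F_\infty$, two in $F_\infty^+$, each ramified) follows from Lemma \ref{lm deco to 2} together with the splitting of $p$ in $\QQ(\sqrt q)$ or $\QQ(\sqrt{2q})$, giving $\lambda^-(F)-1=2(1-1)+4-2$. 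So in your version the mod-$16$ hypothesis enters through a decomposition analysis you still owe, whereas in the paper it enters through a cited $\lambda^-$ value. A final shared caveat: applying Theorem \ref{thm rank zp} needs $A_\infty(F)$ elementary as a $\Lambda$-module and the ramified primes of $F_\infty/F$ totally ramified; you correctly flag this as unverified, and on this point your write-up is no less complete than the paper's.
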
	
 \begin{proof}	Assume that $p\equiv5\pmod 8$.
 	Let $F_n$ denote the $n$-th ($n\geq 1$) layer of the cyclotomic $\ZZ_2$-extension of $F$ and $A_n(F)$ denote the $2$-class group of $F_n$.
 	Let us define $A_n^+$ as the group of strongly ambiguous classes with respect to the extension $F_n/F_n^+$ and $A_n^-=A_n/A_n^+$ (cf. \cite{Katharina} for more details about this new definition). Since the class number of 
 	$F_n^+$ is odd (Lemma \ref{lm parity of cn of real fields}), then $A_n^+$ is generated by the ramified primes above $2$. Since $F_1/F_1^+$ is unramified, then  $F_n/F_n^+$ is also unramified. Therefore $A_n^+$ is trivial. So $A_n=A_n^-$. By  \cite[Theorem 2.5]{Katharina} there is no finite submodule in $  A_\infty^-$. 
 	Hence $A_\infty(F) =A_\infty^-$ is a $\Lambda$-module without finite part.\\
 	\noindent $\bullet$ Assume now that $q\equiv 7\pmod{16}$.
 	
 	Set $K_n=\mathbb Q{(\zeta_{2^{n+2}})}$  and $K=\mathbb{Q}(\sqrt{q}, i)$.   
 	Note that $p$ splits into $2$ prime ideals in  $\mathbb{Q}(\sqrt{q})$ or $\mathbb{Q}(\sqrt{2q})$.
 	Since by Lemma \ref{lm deco to 2}, $p$ splits into $2$ primes of $K_n$ and inert   in $K_n^+$, for $n$ large enough, then
 	$p$ splits into $4$ primes in $F_n$   while it splits into $2$ primes in $F_n^+$.  By \cite[Theorem 4]{chemskatharina} we have $\lambda^-(K)=1$.
 	Since $[F_{\infty}:K_{\infty}]=[F_{\infty}^+:K_{\infty}^+]=2$,	then by Kida's  formula (Theorem \ref{Kida}) applied on $F/K$ we have 
 	\[\lambda^-(F)-1=2(1-1)+4-2.\]
 	Thus by Lemma \ref{lm parity of cn of real fields} $\lambda(F)=\lambda^-(F)=3$. It follows that
 	$$A_\infty(F) \sim \bigoplus  \Lambda/(f_i(T)),$$
 	for some distinguished polynomials $f_i$ such that  $\sum \mathrm{deg}(f_i)=3$. Since by the division algorithm we have $ \Lambda/(f_i(T))\simeq \ZZ_2^{\mathrm{deg}(f_i)}$, then
 	$A_\infty(F)\simeq\ZZ_2^{3}$. Therefore, Theorem \ref{thm rank zp} completes the proof of the result for  $p\equiv5\pmod 8$. We   proceed similarly for the other cases.
  \end{proof}

\end{document}